\title{{\bf Some properties of various graphs associated with finite groups }}
\author{ {\bf Xiaoyou Chen}$^{1,3}$, {\bf Alireza Moghaddamfar}$^{ 2, 3}$\\[0.3cm] and {\bf Mahsa Zohourattar}$^2$\\[0.3cm] 
{\em   $^1$College of Science, Henan University of Technology,}\\ {\em $450001$, Zhengzhou, China}\\[0.2cm]
{\em  $^2$Faculty of Mathematics, K. N. Toosi
University of Technology,}\\
{\em P. O. Box $16315$--$1618$, Tehran, Iran,}\
 \and   and \\[0.2cm]
 {\em  $^3$Department of Mathematical Sciences, Kent State
University,}\\ {\em   Kent, Ohio $44242$, United States of
America}\\[0.2cm]
{\em  E-mails}:  {\tt 
moghadam@kntu.ac.ir}, { and} {\tt  amoghadd@kent.edu}\\[0.3cm] }
\newenvironment{proof}{\noindent {\em {Proof}}.}{$\square$
\medskip}
\newtheorem{theorem}{Theorem}[section]
\newtheorem{definition}[theorem]{Definition}
\newtheorem{corollary}[theorem]{Corollary}
\newtheorem{proposition}[theorem]{Proposition}
\newtheorem{lm}[theorem]{Lemma}
\begin{document}
\newcommand{\f}{\frac}
\newcommand{\sta}{\stackrel}
\maketitle
\begin{abstract}
\noindent  In this paper we have investigated some properties of the power graph and  commuting graph associated with 
a finite group, using their tree-numbers. Among other things, it has been shown that the simple group 
$L_2(7)$ can be characterized through the tree-number of its power graph.  Moreover, the classification 
of groups with power-free decomposition is presented. Finally,  we  have obtained 
an explicit formula concerning the tree-number of commuting graphs
associated with the Suzuki simple groups. 
\end{abstract}

{\em Keywords:}   Power graph, commuting graph,  tree-number, simple group.

\def\thefootnote{ \ }
\footnote{{\em $2010$ Mathematics Subject Classification}: 05C25,  20D05, 20D06.}

\renewcommand{\baselinestretch}{1}

\section{Notation and Definitions}
We will consider finite undirected simple graphs $\Gamma=(V_\Gamma, E_\Gamma)$, where $V_\Gamma\ne \emptyset$ and $E_\Gamma$ are the vertex set and edge set of $\Gamma$, respectively.
A  {\em clique} (or a {\em complete set}) in $\Gamma$ is a subset of $V_\Gamma$ consisting of pairwise adjacent vertices (we do not require that it be a maximal complete set).
Especially, a complete graph is a graph in which the vertex set is a complete set. 
A {\em coclique} ({\em edgeless graph} or {\em independent set}) in $\Gamma$ is a set of pairwise nonadjacent vertices. The {\em independence number},  denoted by $\alpha(\Gamma)$, is the size of the largest coclique in $\Gamma$.
 
 A spanning tree for a graph $\Gamma$ is a subgraph of $\Gamma$ which is a tree and contains all the vertices of $\Gamma$.  The {\em tree-number} (or {\em complexity}) of a graph $\Gamma$, denoted by $\kappa (\Gamma)$, is the number of spanning trees of $\Gamma$ (0 if $\Gamma$ is disconnected), see \cite{sachs}. The famous Cayley formula shows that the complexity of the complete graph with $n$ vertices is given by $n^{n-2}$ (Cayley's formula).

In this paper, we shall be concerned with some graphs arising from {\em finite} groups. Two well known graphs associated with groups are commuting graphs and power graphs, as defined more precisely below.
Let $G$ be a finite group and $X$ a nonempty subset of $G$:
\begin{itemize}
\item The {\em power graph} $\mathcal{P}(G, X)$,  has
$X$ as its vertex set with two distinct elements of
$X$ joined by an edge when one is a {\em power} of the other.
\item The {\em commuting graph} $\mathcal{C}(G, X)$,  has
$X$ as its vertex set with two distinct elements of
$X$ joined by an edge when they {\em commute} in $G$.
\end{itemize}

Clearly,  power graph $\mathcal{P}(G, X)$ is a subgraph of $\mathcal{C}(G, X)$.
In the case $X=G$, we will simply write $\mathcal{C}(G)$ and $\mathcal{P}(G)$ instead of  $\mathcal{C}(G, G)$ and $\mathcal{P}(G, G)$, respectively.
Power and commuting graphs have been
considered in the literature, see for instance \cite{Kelarev, Britnell, Cameron, Das, Mahmoudifar,  Moghaddamfar}. In particular, in \cite[Lemma 4.1]{Mahmoudifar}, it is shown that $\mathcal{P}(G)=\mathcal{C}(G)$ if and only if $G$ is a cyclic group of prime power order, or a generalized quaternion $2$-group, or a Frobenius group with kernel a cyclic $p$-group and complement a
cyclic $q$-group, where $p$ and $q$ are distinct primes.
Obviously, when $1\in X$,  the power graph $\mathcal{P}(G, X)$ and the commuting graph $\mathcal{C}(G, X)$  are connected, and we can talk about the complexity 
of these graphs. For convenience, we put  $\kappa_{\cal P}(G, X)=\kappa (\mathcal{P}(G, X))$, $\kappa_{\cal P} (G)=\kappa (\mathcal{P}(G))$, $\kappa_{\cal C}(G, X)=\kappa (\mathcal{C}(G, X))$ and $\kappa_{\cal C} (G)=\kappa (\mathcal{C}(G))$. Also, instead of $\kappa_{\cal P}(G, X)$ and $\kappa_{\cal C}(G, X)$, we simply write  $\kappa_{\cal P}(X)$ and $\kappa_{\cal C}(X)$, if it does not lead to confusion.
All groups under discussion in this paper are finite and our group theoretic notation is mostly standard and follows that in \cite{atlas}. 

%%%%%%%%%%%%%%%%%%%%%%%%%%%%%%%%%%%%%%%%%%%%%%%%%%%%%%%%
\section{General Lemmas}
We first establish some notation which will be used repeatedly in the sequel.
Given a graph $\Gamma$, we denote by $\mathbf{A}_\Gamma$ and $\mathbf{D}_\Gamma$ the adjacency matrix and the diagonal matrix of vertex degrees of $\Gamma$, respectively. The Laplacian matrix of $G$ is defined
as $\mathbf{L}_\Gamma=\mathbf{D}_\Gamma-\mathbf{A}_\Gamma$. Clearly, $\mathbf{L}_\Gamma$ is a real symmetric matrix and its eigenvalues are nonnegative real numbers.
The Laplacian spectrum of $\Gamma$ is
$${\rm Spec}(\mathbf{L}_\Gamma)=\left(\mu_1(\Gamma), \mu_2(\Gamma), \ldots, \mu_n(\Gamma)\right),$$
where $\mu_1(\Gamma)\geqslant \mu_2(\Gamma)\geqslant \cdots\geqslant \mu_n(\Gamma)$,
are the eigenvalues of $L_\Gamma$ arranged in weakly decreasing order, and $n=|V(\Gamma)|$.
Note that,  $\mu_n(\Gamma)$ is  $0$, because each row sum of $\mathbf{L}_\Gamma$ is 0.
Instead of $\mathbf{A}_\Gamma$, $\mathbf{L}_\Gamma$, and $\mu_i(\Gamma)$  we simply write $\mathbf{A}$, $\mathbf{L}$,  and $\mu_i$  if it does not lead to confusion.

For a graph with $n$ vertices and Laplacian spectrum $\mu_1\geqslant \mu_2\geqslant \cdots\geqslant \mu_n=0$
it has been proved \cite[Corollary 6.5]{Biggs} that:
\begin{equation}\label{eq1}
\kappa(\Gamma)=\frac{\mu_1 \mu_2 \cdots \mu_{n-1}}{n}.
\end{equation}

The vertex--disjoint union of the graphs $\Gamma_1$ and $\Gamma_2$ is denoted by $\Gamma_1\oplus\Gamma_2$.
Define the {\em join} of $\Gamma_1$ and $\Gamma_2$ to be $\Gamma_1 \vee \Gamma_2=(\Gamma_1^c\oplus\Gamma_2^c)^c$. Evidently this is the graph formed from the vertex--disjoint union of the two graphs $\Gamma_1, \Gamma_2$,
by adding edges joining every vertex of  $\Gamma_1$  to every
vertex of $\Gamma_2$.
Now, one may easily prove the following (see also \cite{Merris}). 
\begin{lm}\label{elementary0}
Let $\Gamma_1$ and $\Gamma_2$ be two graphs on disjoint sets  with $m$ and $n$ vertices, respectively. If
$${\rm Spec}(\mathbf{L}_{\Gamma_1})=\left(\mu_1(\Gamma_1), \mu_2(\Gamma_1), \ldots, \mu_m(\Gamma_1)\right),$$
and
$${\rm Spec}(\mathbf{L}_{\Gamma_2})=\left(\mu_1(\Gamma_2), \mu_2(\Gamma_2), \ldots, \mu_n(\Gamma_2)\right),$$ then the following hold:
\begin{itemize}
\item[{\rm (1)}] The eigenvalues of Laplacian matrix  $\mathbf{L}_{\Gamma_1\oplus \Gamma_2}$ are:
$$\mu_1(\Gamma_1), \  \ldots, \ \mu_{m}(\Gamma_1),  \ \mu_1(\Gamma_2), \  \ldots, \ \mu_{n}(\Gamma_2).$$

\item[{\rm (2)}] The eigenvalues of Laplacian matrix  $\mathbf{L}_{\Gamma_1\vee \Gamma_2}$ are:
$$m+n, \ \mu_1(\Gamma_1)+n, \  \ldots, \  \mu_{m-1}(\Gamma_1)+n, \ \mu_1(\Gamma_2)+m, \ \ldots, \ \mu_{n-1}(\Gamma_2)+m, \ 0.$$
\end{itemize}
\end{lm}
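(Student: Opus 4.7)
The plan is to exhibit explicit eigenvectors of the Laplacian matrices of $\Gamma_1 \oplus \Gamma_2$ and $\Gamma_1 \vee \Gamma_2$ using the block structure inherited from the partition of the vertex set into $V(\Gamma_1)$ and $V(\Gamma_2)$. Part (1) is essentially a free consequence of this block structure, so I would devote most of the work to part (2).

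First I would observe that, because no edges run between $V(\Gamma_1)$ and $V(\Gamma_2)$ in $\Gamma_1\oplus\Gamma_2$ and the degrees are unchanged, the Laplacian is block diagonal with blocks $\mathbf{L}_{\Gamma_1}$ and $\mathbf{L}_{\Gamma_2}$. The spectrum is therefore the multiset union of the two spectra, giving (1).

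For part (2), I would write
\[
\mathbf{L}_{\Gamma_1\vee\Gamma_2}=\begin{pmatrix}\mathbf{L}_{\Gamma_1}+n\mathbf{I}_m & -\mathbf{J}_{m\times n}\\ -\mathbf{J}_{n\times m} & \mathbf{L}_{\Gamma_2}+m\mathbf{I}_n\end{pmatrix},
\]
since joining each vertex of $\Gamma_1$ to each vertex of $\Gamma_2$ adds $n$ to the degree of every vertex in $\Gamma_1$ (similarly $m$ for $\Gamma_2$) and produces all-ones off-diagonal blocks in the adjacency matrix. Then I would construct $m+n$ independent eigenvectors:
\begin{itemize}
\item[(i)] The all-ones vector gives eigenvalue $0$.
\item[(ii)] The vector $(n\mathbf{1}_m,\, -m\mathbf{1}_n)^{\top}$ is killed by both Laplacian blocks and a quick check using $\mathbf{J}_{m\times n}\mathbf{1}_n=n\mathbf{1}_m$ and $\mathbf{J}_{n\times m}\mathbf{1}_m=m\mathbf{1}_n$ shows it has eigenvalue $m+n$.
\item[(iii)] For every eigenvector $\mathbf{u}$ of $\mathbf{L}_{\Gamma_1}$ orthogonal to $\mathbf{1}_m$ (the eigenvectors corresponding to $\mu_1(\Gamma_1),\dots,\mu_{m-1}(\Gamma_1)$), the extended vector $(\mathbf{u},\mathbf{0})^{\top}$ is annihilated by $-\mathbf{J}_{n\times m}$ since $\mathbf{1}_m^{\top}\mathbf{u}=0$, so it is an eigenvector with eigenvalue $\mu_i(\Gamma_1)+n$.
\item[(iv)] Symmetrically, extending the nonzero-sum-free eigenvectors of $\mathbf{L}_{\Gamma_2}$ by zeros on $V(\Gamma_1)$ produces eigenvectors with eigenvalues $\mu_j(\Gamma_2)+m$ for $j=1,\dots,n-1$.
\end{itemize}

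Counting gives $2+(m-1)+(n-1)=m+n$ eigenvectors, and a direct check shows they are linearly independent (the first two live in the two-dimensional span of the indicator vectors of $V(\Gamma_1)$ and $V(\Gamma_2)$, while those in (iii) and (iv) lie in the orthogonal complement inside each block). This accounts for all $m+n$ eigenvalues of $\mathbf{L}_{\Gamma_1\vee\Gamma_2}$ and proves (2). The only subtle point is verifying the sum-zero condition needed to make the $\mathbf{J}$ blocks vanish on the extended eigenvectors; everything else reduces to block matrix multiplication, so I would not expect any genuine difficulty beyond careful bookkeeping.
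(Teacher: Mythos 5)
Your proof is correct and complete. The paper itself gives no proof of this lemma, merely remarking that ``one may easily prove'' it and citing Merris; your explicit block decomposition of $\mathbf{L}_{\Gamma_1\vee\Gamma_2}$ and the exhibition of the $m+n$ eigenvectors (the all-ones vector, the vector $(n\mathbf{1}_m,-m\mathbf{1}_n)^{\top}$, and the zero-padded eigenvectors orthogonal to the all-ones vector in each block) is exactly the standard argument being alluded to, with all the bookkeeping done correctly.
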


\noindent {\em Two Examples.}  (1) Consider the complete bipartite graph $K_{a, b}=K_a^c\vee K_b^c$. 
Then, by Lemma \ref{elementary0} (2), the eigenvalues of Laplacian matrix  $\mathbf{L}_{K_a^c\vee K_b^c}$ are:
$$a+b, \ \underbrace{b, \ b,  \ldots, \  b}_{(a-1)-{\rm times}},  \underbrace{a, \ a,  \ldots, \ a}_{(b-1)-{\rm times}} , \ 0.$$
Using Eq. (\ref{eq1}) we get $\kappa(K_{a, b})=b^{a-1} a^{b-1}$.

(2)
A graph $\Gamma$ is a {\em split graph} if its vertex set  can be partitioned into a clique $C$ and an independent set $I$, where $V_\Gamma=C\uplus I$ is called a {\em split partition} of $\Gamma$ (see \cite{Foldes-Hammer}).  Now, consider the split graph $\Gamma=K_a\vee K_b^c$. Again, by Lemma \ref{elementary0} (2), the eigenvalues of Laplacian matrix  $\mathbf{L}_{K_a\vee K_b^c}$ are:
$$a+b, \ \underbrace{a+b, \  a+b, \ \ldots, \  a+b}_{(a-1)-{\rm times}},  \underbrace{a, \ a, \  \ldots, \ a}_{(b-1)-{\rm times}} , \ 0,$$
and it follows from Eq. (\ref{eq1}) that  $\kappa(\Gamma)=(a+b)^{a-1} a^{b-1}$.

Next lemma determines the complete commuting graphs and power graphs.
\begin{lm}\label{complete-CSS}
Let $G$ be a finite group. Then, we have 
\begin{itemize}
\item[{\rm (1)}]  The commuting graph $\mathcal{C}(G)$ is complete iff $G$ is an abelian group. 

\item[{\rm (2)}]  The power graph $\mathcal{P}(G)$ is complete iff $G$ is a cyclic $p$-group for some
prime $p$ {\rm (see Theorem 2.12 in   \cite{CSS})}.
\end{itemize}
\end{lm}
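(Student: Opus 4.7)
The plan is to handle the two parts separately, with part (1) being essentially tautological and part (2) requiring a small bit of work.

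For part (1), I would simply unwind the definitions. The graph $\mathcal{C}(G)$ is complete precisely when every pair of distinct elements of $G$ is joined by an edge, i.e.\ when every pair of distinct elements commutes; since the identity commutes with everything, this is exactly the condition that $G$ be abelian. No further argument is needed.

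For part (2), the backward direction is the easier of the two. If $G=\langle g\rangle$ is cyclic of order $p^n$, then for any two distinct nontrivial $x,y\in G$, with $|x|=p^a$ and $|y|=p^b$ and (say) $a\le b$, the unique subgroup structure of $G$ gives $\langle x\rangle\subseteq\langle y\rangle$, so $x$ is a power of $y$; the identity is trivially a power of everything. Hence $\mathcal{P}(G)$ is complete.

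For the forward direction, assume $\mathcal{P}(G)$ is complete. I would first argue that $G$ is cyclic: pick $x\in G$ of maximum order, and for any $y\in G$ use that either $y$ is a power of $x$, so $y\in\langle x\rangle$, or $x$ is a power of $y$, in which case $|x|$ divides $|y|$; by the maximality of $|x|$ we get $|x|=|y|$ and $\langle x\rangle=\langle y\rangle$, so in either case $y\in\langle x\rangle$, whence $G=\langle x\rangle$. Next I would show $|G|$ is a prime power: if two distinct primes $p\ne q$ divided $|G|$, then elements $u,v\in G$ of orders $p$ and $q$ respectively would satisfy neither $u\in\langle v\rangle$ nor $v\in\langle u\rangle$ (the orders are coprime), contradicting completeness of $\mathcal{P}(G)$. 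The only potential obstacle is making the cyclic step clean, but the maximum-order trick above handles it in one line, and the rest is essentially definition-chasing.
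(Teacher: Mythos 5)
Your proof is correct. Note, however, that the paper does not actually prove this lemma: part (1) is treated as immediate from the definitions, and part (2) is simply quoted from Theorem 2.12 of \cite{CSS}, so there is no in-paper argument to compare against. Your self-contained treatment is the standard one, and both nontrivial steps are sound: the maximum-order trick correctly forces $G=\langle x\rangle$ (if $x\in\langle y\rangle$ then $|x|\leqslant |y|$, so maximality gives $\langle x\rangle=\langle y\rangle$), and the coprime-order pair obtained from Cauchy's theorem correctly rules out two distinct prime divisors.
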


An immediate consequence of Lemma \ref{complete-CSS} is that 
$\kappa_{\cal P}(\mathbb{Z}_{p^n})=p^{n(p^n-2)}$ and if $X\subset G$ is a commuting set, then 
$\kappa_{\cal C}(G, X)=|X|^{|X|-2}$. We will use these facts without further references. 

\begin{lm}\label{elementary}  Let $M$ and $N$ be subgroups of a group $G$ such that $M\cap N=1$.
Let $x\in M$ and $y\in N$ be two arbitrary nontrivial elements. Then, $x$ and $y$ are nonadjacent in ${\cal P}(G)$ as two vertices. 
In particular, if $m>1$ and 
$X=\cup_{j=1}^{m}G_j$,
where $1<G_j<G$ and $G_i\cap G_j=1$ for $i\neq j$, then we have
$$\mathcal{P}(X^\#)=\bigoplus_{j=1}^{m}\mathcal{P}\big(G_j^\#\big),$$
where $X^\#=X\setminus \{1\}$  and $G_j^\#=G_j\setminus \{1\}$. 
\end{lm}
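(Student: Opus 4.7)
The plan is to handle the first assertion by a direct cyclic-subgroup argument, and then derive the decomposition of $\mathcal{P}(X^\#)$ as a routine bookkeeping consequence. The key observation is that adjacency in a power graph is witnessed by one element lying in the cyclic subgroup generated by the other, so any edge in $\mathcal{P}(G)$ is "localised" inside a single cyclic subgroup.

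For the first statement, I would argue by contradiction: suppose $x \in M\setminus\{1\}$ and $y \in N\setminus\{1\}$ are adjacent in $\mathcal{P}(G)$. By definition one of them is a power of the other; without loss of generality $y = x^k$ for some integer $k$. Then $y \in \langle x \rangle \leqslant M$, so $y \in M \cap N = 1$, contradicting $y \neq 1$. The other case, $x = y^k$, is symmetric and yields $x \in M \cap N = 1$.

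For the second statement, the hypothesis $G_i \cap G_j = 1$ for $i \neq j$ gives a disjoint union of vertex sets $X^\# = G_1^\# \uplus G_2^\# \uplus \cdots \uplus G_m^\#$, so it remains only to identify the edges of $\mathcal{P}(X^\#)$. I would split into two cases: (a) if $u, v$ lie in the same $G_j^\#$, then they are adjacent in $\mathcal{P}(X^\#)$ if and only if one is a power of the other in $G$, which is exactly the adjacency relation in $\mathcal{P}(G_j^\#)$ (the power relation depends only on the elements, not on the ambient graph); (b) if $u \in G_i^\#$ and $v \in G_j^\#$ with $i \neq j$, then applying the first part with $M = G_i$ and $N = G_j$ shows $u$ and $v$ are nonadjacent. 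These two cases together match exactly the definition of the vertex-disjoint union $\bigoplus_{j=1}^m \mathcal{P}(G_j^\#)$.

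There is no real obstacle here; the only subtle point worth stating carefully is that the ambient graph $\mathcal{P}(G)$ does not create unexpected "cross" edges between different $G_j^\#$'s, which is precisely what the first part rules out. Consequently the proof is essentially a one-line reduction followed by verifying the edge sets on the two sides agree.
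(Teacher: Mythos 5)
Your proof is correct and follows essentially the same route as the paper: both arguments reduce adjacency to containment in a cyclic subgroup and then observe that $\langle x\rangle\cap\langle y\rangle\subseteq M\cap N=1$ forbids it, with the decomposition of $\mathcal{P}(X^\#)$ following as routine bookkeeping. Your write-up is merely a more explicit version of the paper's one-line argument.
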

\begin{proof}  It is easy to see that 
$\langle x\rangle \cap \langle y\rangle\subseteq M\cap N=1$,
which forces $\langle x\rangle \not\subseteq \langle y\rangle$ and $\langle y\rangle \not\subseteq \langle x\rangle$.
\end{proof}

As an immediate consequence of Lemma \ref{elementary}, we obtain
\begin{corollary}\label{cor1}
Let $G$ be a finite group. Then the following hold:
\begin{itemize} 
\item[{\rm (1)}] If $G$ has even order, then ${\rm Inv}(G)$, the set  of involutions of $G$, forms an independent set of $\mathcal{P}(G)$.
\item[{\rm (2)}] Any pair of elements in $G$ with relatively prime orders forms an independent set of $\mathcal{P}(G)$, especially we have  $\alpha({\cal P}(G))\geqslant |\pi(G)|$, where $\pi(G)$ denotes 
the set of all prime divisors of $|G|$.
\end{itemize}
\end{corollary}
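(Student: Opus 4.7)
The plan is to derive both items directly from Lemma \ref{elementary} by choosing appropriate cyclic subgroups whose intersection is trivial.

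For (1), I would take two distinct involutions $x, y \in \mathrm{Inv}(G)$ and set $M = \langle x \rangle$, $N = \langle y \rangle$. Each of these has order $2$, so $M \cap N$ is a subgroup of a group of order $2$; if $M \cap N \neq 1$, then $M = N$ and hence $x = y$, contradiction. Therefore $M \cap N = 1$, and Lemma \ref{elementary} yields that $x$ and $y$ are nonadjacent in $\mathcal{P}(G)$. Since this holds for every pair of distinct involutions, $\mathrm{Inv}(G)$ is an independent set of $\mathcal{P}(G)$. Note that by Cauchy's theorem the set $\mathrm{Inv}(G)$ is nonempty since $|G|$ is even.

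For (2), let $x, y \in G$ be nontrivial with $\gcd(|x|, |y|) = 1$, and again set $M = \langle x \rangle$, $N = \langle y \rangle$. The order of any element of $M \cap N$ must divide both $|x|$ and $|y|$, hence divides $\gcd(|x|, |y|) = 1$, so $M \cap N = 1$. Lemma \ref{elementary} then shows that $x$ and $y$ are nonadjacent in $\mathcal{P}(G)$. For the bound on $\alpha(\mathcal{P}(G))$, apply Cauchy's theorem to choose, for each prime $p \in \pi(G)$, an element $x_p$ of order $p$. Any two distinct $x_p, x_q$ then have coprime orders, so by what we just proved, $\{x_p : p \in \pi(G)\}$ is an independent set of $\mathcal{P}(G)$ of size $|\pi(G)|$, giving $\alpha(\mathcal{P}(G)) \geq |\pi(G)|$.

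There is no real obstacle here: the whole argument is a direct application of Lemma \ref{elementary} to the cyclic subgroups generated by the elements in question, combined with an invocation of Cauchy's theorem to produce elements of prime order for the bound on $\alpha(\mathcal{P}(G))$.
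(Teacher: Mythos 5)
Your proof is correct and is exactly the argument the paper intends: the corollary is stated as an immediate consequence of Lemma \ref{elementary}, applied to the cyclic subgroups $\langle x\rangle$ and $\langle y\rangle$, which intersect trivially in both cases. The invocation of Cauchy's theorem to produce the independent set of size $|\pi(G)|$ is the standard (implicit) step for the bound on $\alpha(\mathcal{P}(G))$.
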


%%%%%%%%%%%%%%%%%%%%%%%%%%%%%%%%%%%%%%%%%%%%%%%%%

A {\em universal} vertex is a vertex of a graph that is adjacent to all other vertices of the graph.
The following lemma  \cite[Proposition $4$]{Cameron} determines the set of universal vertices of the power graph
of $G$, in general case.

\begin{lm}\label{universal} 
 Let $G$ be a finite group and $S$  the set of universal vertices of the power graph $\mathcal{P}(G)$. Suppose that $|S|>1$. Then one of the following occurs:
\begin{itemize}
\item[{\rm (a)}]  $G$ is cyclic of prime power order, and $S=G$;
\item[{\rm (b)}]  $G$ is cyclic of non-prime-power order $n$, and $S$ consists of the identity and the 
generators of $G$, so that $|S|=1+\phi(n)$;
\item[{\rm (c)}]  $G$ is generalized quaternion, and $S$ contains the identity and the unique involution in $G$, so that $|S|=2$.
\end{itemize}
\end{lm}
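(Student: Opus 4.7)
The plan is to fix a non-identity universal vertex $x \in S$ and translate universality into the divisibility condition: for every $y \in G$, either $\langle y\rangle \subseteq \langle x \rangle$ (equivalently $|y| \mid |x|$) or $\langle x \rangle \subseteq \langle y\rangle$ (equivalently $|x| \mid |y|$). Everything will flow from this, together with a dichotomy between the cases where $|x|$ is a prime power and where it is not.

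First I would reduce to two scenarios: $G$ is cyclic, or $G$ is a $p$-group. If $|x|$ has at least two distinct prime divisors, then for every prime $q \in \pi(G)$ and every non-identity element $y$ of a Sylow $q$-subgroup $Q$, the order $|y|$ is a prime power, so $|x| \nmid |y|$ forces $|y| \mid |x|$ and hence $y \in \langle x \rangle$; thus $Q \leq \langle x \rangle$ for every Sylow subgroup and consequently $G = \langle x \rangle$ is cyclic. In the complementary case $|x| = p^k$, any element of prime order $q \neq p$ would demand $q \mid p^k$ or $p^k \mid q$, both impossible, so $G$ must be a $p$-group.

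If $G$ is cyclic of order $n$, I would identify the universal vertices as those elements whose order $d$ is comparable under divisibility with every divisor of $n$. When $n = p^m$, this holds for every $d$, so $\mathcal{P}(G)$ is complete and $S = G$: case~(a). When $n$ has at least two prime divisors, I would exhibit, for every intermediate order $1 < d < n$, a divisor of $n$ incomparable with $d$ (either a prime $r \mid n$ with $r \nmid d$, or a maximal prime-power divisor $p^{v_p(n)}$ of $n$ with $p \mid d$ and $v_p(d) < v_p(n)$), showing the only universal vertices are the identity together with the $\phi(n)$ generators: case~(b).

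If $G$ is a non-cyclic $p$-group, I would first show every element $y$ of order $p$ satisfies $y \in \langle x \rangle$ (the alternative $\langle x \rangle \subseteq \langle y \rangle$ still forces $\langle x \rangle = \langle y \rangle$), so $G$ has a unique subgroup of order $p$. By the classical structure theorem, which I would cite rather than reprove, $G$ is then generalized quaternion $Q_{2^n}$ with $n \geq 3$. In $Q_{2^n}$ the unique involution $t$ belongs to every non-trivial cyclic subgroup, hence lies in $S$; conversely, for any $g \in G \setminus \{1, t\}$, I would exhibit a non-neighbour by pairing the unique cyclic maximal subgroup $\langle a \rangle$ of order $2^{n-1}$ against the elements of order $4$ outside it, concluding $S = \{1, t\}$: case~(c). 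The main obstacle is this appeal to the structure theorem; everything else reduces to elementary arithmetic of orders.
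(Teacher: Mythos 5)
The paper does not prove this lemma at all: it is imported verbatim as \cite[Proposition 4]{Cameron}, so there is no internal proof to compare against. Your argument is a correct, self-contained proof, and it follows essentially the same route as Cameron's original one: fix a non-identity universal vertex $x$, split on whether $o(x)$ is a prime power, conclude that $G$ is cyclic in the non-prime-power case, and in the $p$-group case deduce a unique subgroup of order $p$ and invoke the cyclic-or-generalized-quaternion classification. All the individual steps check out: Sylow subgroups land inside $\langle x\rangle$ when $o(x)$ has two prime divisors; the divisor-comparability analysis in the cyclic non-prime-power case correctly isolates $S=\{1\}\cup\{\text{generators}\}$; and in $Q_{2^n}$ the unique involution lies in every nontrivial subgroup, hence is universal, while every other non-identity element has a non-neighbour. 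One cosmetic slip: for $n=3$ the cyclic maximal subgroup of $Q_8$ is not unique (there are three cyclic subgroups of order $4$), so you should say ``a cyclic maximal subgroup'' there; the non-adjacency argument you describe is unaffected. If the intent were to supply a proof for the paper, yours would serve, but as written the paper treats this as a quoted external result.
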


\begin{lm} \label{th-4}  {\rm \cite[Theorem 3.4]{MRSS}}
If $H_1, H_2, \ldots, H_t$ are nontrivial subgroups of a group $G$
such that $H_i\cap H_j=\{1\}$, for each
$1\leqslant i<j\leqslant t$, then we have 
$\kappa_{\cal P}(G)\geqslant\kappa_{\cal P}(H_1) \kappa_{\cal P}(H_2)\cdots\kappa_{\cal P}(H_t)$. 
\end{lm}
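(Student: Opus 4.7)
The plan is to exhibit an explicit injection from the Cartesian product of the spanning-tree sets of $\mathcal{P}(H_1),\ldots,\mathcal{P}(H_t)$ into the set of spanning trees of $\mathcal{P}(G)$; this immediately yields $\kappa_{\cal P}(G)\ge \prod_{j=1}^t\kappa_{\cal P}(H_j)$. Two facts already at our disposal make the construction natural. First, each $\mathcal{P}(H_j)$ is an induced subgraph of $\mathcal{P}(G)$, and by Lemma~\ref{elementary} (applied to $H_i\cap H_j=\{1\}$) there is no edge of $\mathcal{P}(G)$ joining a nontrivial element of $H_i$ to a nontrivial element of $H_j$ when $i\ne j$. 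Second, the identity is a universal vertex of $\mathcal{P}(G)$, since $1=g^{|g|}$ is a power of every $g\in G$.

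Given any tuple $(T_1,\ldots,T_t)$ with $T_j$ a spanning tree of $\mathcal{P}(H_j)$, first set $T=\bigcup_{j=1}^t T_j$. The $T_j$'s pairwise share exactly the vertex $1$ and no edges, so $T$ is connected; and any cycle in $T$ would be forced to lie inside a single $T_j$, contradicting that $T_j$ is a tree. Writing $X=\bigcup_j H_j$, a short inclusion--exclusion that uses the fact that every intersection of two or more of the $H_j$'s equals $\{1\}$ gives $|X|=\sum_j|H_j|-(t-1)$, and hence $T$ has precisely $\sum_j(|H_j|-1)=|X|-1$ edges. Therefore $T$ is a spanning tree of the subgraph of $\mathcal{P}(G)$ induced on $X$.

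Next I would extend $T$ to a spanning tree $T'$ of $\mathcal{P}(G)$ by adding, for each $g\in G\setminus X$, the edge $\{1,g\}$, which is guaranteed to exist in $\mathcal{P}(G)$ because $1$ is universal. The resulting subgraph has $|G|-1$ edges and is connected, so it is a spanning tree of $\mathcal{P}(G)$. The map $(T_1,\ldots,T_t)\mapsto T'$ is injective: the edges of $T'$ both of whose endpoints lie in $H_j$ are exactly those of $T_j$, since each added edge has one endpoint outside $X$; hence each $T_j$ is recovered from $T'$ as the induced subgraph $T'\cap H_j$.

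I do not foresee a serious obstacle: the only points requiring care are the acyclicity of $\bigcup_j T_j$ and the correct edge count for the extension, both of which are settled by the two observations above. If a spectral approach were preferred, one could instead identify the induced subgraph on $X$ with the join $K_1\vee\bigoplus_{j=1}^t \mathcal{P}(H_j^{\#})$ and appeal to Lemma~\ref{elementary0}, but the explicit injection above is the shortest route to the stated inequality.
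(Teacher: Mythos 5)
Your proof is correct, and it is essentially the same construction the paper relies on: the cited source and the proof of Lemma~\ref{lm-1-frob} in this paper both build spanning trees of $\mathcal{P}(G)$ by gluing spanning trees of the subgroup power graphs at the identity and attaching all remaining vertices to the universal vertex $1$, then counting by the product rule. Your write-up merely makes the injectivity and the edge/vertex counts explicit, which is a welcome bit of extra care rather than a deviation.
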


\begin{lm}\label{marty}  {\rm \cite[Lemma 6.1]{MRSS}}
Let $G$ be a finite nonabelian simple group and let $p$ be a
prime dividing the order of $G$. Then $G$ has at least $p^2-1$
elements of order $p$, or equivalently, there is at least $p+1$
cyclic subgroups of order $p$ in $G$.
\end{lm}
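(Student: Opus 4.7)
\textbf{Proof plan for Lemma \ref{marty}.} The plan is to count the number $k$ of cyclic subgroups of order $p$ in $G$ and show directly that $k\ge p+1$. This immediately yields the count of at least $p^2-1$ elements of order $p$, since distinct subgroups of order $p$ intersect trivially and each contributes exactly $p-1$ elements of order $p$; so the two formulations in the statement are indeed equivalent and it suffices to prove one.

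First I would invoke Frobenius's classical theorem: if $n$ divides the order of a finite group $G$, then the number of solutions of $x^n=1$ in $G$ is a multiple of $n$. Applied with $n=p$, which divides $|G|$ by hypothesis, the set of solutions consists of the identity together with every element of order $p$, so the number of elements of order exactly $p$ is congruent to $-1\pmod p$. Since each subgroup of order $p$ contributes $p-1$ distinct elements of order $p$ and distinct order-$p$ subgroups meet only in the identity, one obtains $(p-1)k\equiv -1\pmod p$, which rearranges to $k\equiv 1\pmod p$. Hence either $k=1$ or $k\ge p+1$.

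Next I would rule out the possibility $k=1$ by using the simplicity hypothesis. If $Q$ were the unique subgroup of $G$ of order $p$, then for every $g\in G$ the conjugate $gQg^{-1}$ is again a subgroup of order $p$ and so must coincide with $Q$; thus $Q\trianglelefteq G$. Since $G$ is a finite nonabelian simple group, $|G|>p$ and $G$ has no proper nontrivial normal subgroup, a contradiction. Therefore $k\ge p+1$, as required.

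The main potential obstacle is simply having Frobenius's theorem available. If one preferred to avoid it, an alternative route would be to split on whether a Sylow $p$-subgroup $P$ of $G$ already contains at least $p+1$ subgroups of order $p$ (automatic unless $P$ is cyclic or generalized quaternion, by a standard count of subgroups in a $p$-group). In the remaining cases, where $P$ has a unique subgroup $Q$ of order $p$, one would analyse $N_G(Q)$ and use simplicity to force $[G:N_G(Q)]\ge p+1$; this is noticeably more delicate than the Frobenius route outlined above, which I would therefore prefer.
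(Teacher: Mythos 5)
The paper does not prove this lemma; it is quoted verbatim from \cite[Lemma 6.1]{MRSS}, so there is no in-paper argument to compare yours against. On its own merits your proof is correct and complete: Frobenius's theorem applied to $x^p=1$ gives $1+(p-1)k\equiv 0\pmod p$, hence $k\equiv 1\pmod p$ for the number $k$ of subgroups of order $p$; the case $k=1$ would make that subgroup normal and proper (proper because a group of order $p$ is abelian while $G$ is not), contradicting simplicity; and the passage between the two formulations is correctly justified because distinct subgroups of prime order intersect trivially, so the elements of order $p$ number exactly $(p-1)k$. The only external input is Frobenius's theorem, which is standard; your sketched Sylow-based alternative is unnecessary.
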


%%%%%%%%%%%%%%%%%%%%%%%%%%%%%%%%%%%%%%%%%%%%%%

\section{Main Results}

%%%%%%%%%%%%%%%%%%%%%%%%%%%%%%%%%%%%%%%%%%%%%%

\subsection{Power Graphs}
We begin with some elementary but useful results of power graphs. 
Before stating the results, we need to introduce some additional notation.
Let $X$ be
a nonempty subset of $G^\#$, the set of nonidentity elements of $G$.  We denote by $1_X$ 
the bipartite graph with partite sets $\{1\}$ and $X$.  
Let $\phi$ denote Euler's totient function, so that  $\phi(n)=|{\Bbb Z}_n^\times|$.
 We will preserve these notation throughout this section.
\begin{lm}\label{lm-1-frob}
Let $G$ be a group and $H$ be a proper subgroup of $G$. If $m$ is
the order of an element of $G\setminus H$, then we have
$$\kappa_{\cal P} (G)\geqslant \left(\phi(m)+1\right)^{\phi(m)-1}\kappa_{\cal P}(H).$$
In particular, $\kappa_{\cal P} (G)\geqslant \kappa_{\cal P}(H)$, with equality if
and only if $G$ is a Frobenius group with kernel $H$ and
complement $C$ of order $2$.
\end{lm}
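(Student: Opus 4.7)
The strategy is to exhibit an explicit spanning subgraph of $\mathcal{P}(G)$ whose number of spanning trees matches the claimed lower bound, and then to invoke the fact that $\kappa$ is monotone under edge addition on a fixed vertex set.

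Fix $x\in G\setminus H$ of order $m$, and set
\[
U=\{1\}\cup\{x^k:1\leqslant k\leqslant m,\ \gcd(k,m)=1\},
\]
so that $|U|=\phi(m)+1$. Any two generators of $\langle x\rangle$ are powers of one another, and $1$ is a universal vertex of $\mathcal{P}(G)$, hence $U$ induces a clique of $\mathcal{P}(G)$. Also $U\cap H=\{1\}$: a generator $y$ of $\langle x\rangle$ lying in $H$ would force $\langle x\rangle=\langle y\rangle\subseteq H$, contradicting $x\notin H$.

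Define the spanning subgraph $B\subseteq\mathcal{P}(G)$ on vertex set $G$ with edge set
\[
E(\mathcal{P}(H))\cup E(K[U])\cup\bigl\{\{1,g\}:g\in G\setminus(H\cup U)\bigr\}.
\]
Each of these edges is present in $\mathcal{P}(G)$, the third family because $1$ is universal. Topologically $B$ is a ``book'' whose three lobes---$\mathcal{P}(H)$, the clique $K_{\phi(m)+1}$ on $U$, and a pendant star---share only the vertex $1$. Spanning trees of $B$ therefore correspond bijectively to pairs (spanning tree of $\mathcal{P}(H)$, spanning tree of $K[U]$), the pendant edges being forced. By Cayley's formula $\kappa(B)=\kappa_{\cal P}(H)\cdot(\phi(m)+1)^{\phi(m)-1}$, and monotonicity of $\kappa$ gives $\kappa_{\cal P}(G)\geqslant \kappa(B)$, which is the first inequality.

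For the ``in particular'' clause, $(\phi(m)+1)^{\phi(m)-1}\geqslant 1$ with equality iff $m=2$. So $\kappa_{\cal P}(G)\geqslant\kappa_{\cal P}(H)$, and equality forces every element of $G\setminus H$ to be an involution. Under that assumption each involution $i\in G\setminus H$ is a leaf of $\mathcal{P}(G)$: a neighbour $g\neq 1,i$ would require $i\in\langle g\rangle$, forcing either $g\in H$ (so $i\in H$, contradiction) or $g$ itself an involution (so $g=i$, contradiction). Hence $\mathcal{P}(G)$ is $\mathcal{P}(H)$ with $|G|-|H|$ leaves at $1$, giving $\kappa_{\cal P}(G)=\kappa_{\cal P}(H)$. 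To extract the Frobenius structure, fix such an $i$; for each $h\in H$ the element $ih$ is in $G\setminus H$, hence an involution, and $(ih)^2=1$ yields $ihi^{-1}=h^{-1}$. So $\langle i\rangle$ inverts $H$ and acts fixed-point-freely on $H\setminus\{1\}$, yielding $G=H\rtimes\langle i\rangle$ with $\langle i\rangle$ a Frobenius complement of order $2$.

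The main obstacle will be the ``only if'' direction of the equality case. Showing that every element of $G\setminus H$ is an involution is immediate from the bound, and the inverting action is a short calculation, but upgrading the inverting action to a genuinely \emph{fixed-point-free} one (i.e.\ ruling out involutions inside $H$ itself) does not follow from the spanning-tree identity alone and will require an additional structural argument.
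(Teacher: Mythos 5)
Your proof of the displayed inequality is correct and is essentially the paper's own argument: the paper likewise takes the clique $\Omega_x=\{y \mid \langle y\rangle=\langle x\rangle\}\cup\{1\}$, a spanning tree of $\mathcal{P}(G,H)$, and the pendant star $1_{G\setminus(\Omega_x\cup H)}$, and counts the resulting spanning trees of $\mathcal{P}(G)$; your reformulation via a spanning subgraph whose lobes meet only at the cut vertex $1$, together with monotonicity of $\kappa$ under edge addition, is the same computation. Your analysis of the equality case --- that equality forces every element of $G\setminus H$ to be an involution, that such involutions are leaves of $\mathcal{P}(G)$ hanging at $1$ (so equality indeed holds under that condition), and that each such involution inverts $H$, whence $H$ is abelian and normal --- is also correct and again parallels the paper.

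The obstacle you flag at the end is a genuine gap, and it is not one you could have closed by a cleverer argument: the inversion automorphism $h\mapsto h^{-1}$ of $H$ is fixed-point-free on $H^{\#}$ if and only if $H$ contains no involution, i.e.\ $|H|$ is odd, and nothing in the hypotheses guarantees this. In fact the ``only if'' half of the equality statement is false as written. Take $G=\mathbb{Z}_2\times\mathbb{Z}_2$ and $H$ a subgroup of order $2$, or $G=D_{2^m}$ with $H$ its cyclic subgroup of index $2$ (compare case (2) of Theorem~\ref{th-1}): in both cases every element of $G\setminus H$ is an involution adjacent only to $1$ in $\mathcal{P}(G)$, so $\kappa_{\cal P}(G)=\kappa_{\cal P}(H)$, yet $G$ is nilpotent and hence not a Frobenius group. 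The paper's proof commits exactly the omission you anticipated: it asserts that $\langle x\rangle$ ``induces a fixed-point-free automorphism of $H$'' with no justification. The most one can conclude from equality is that $H$ is an abelian normal subgroup inverted by every element of $G\setminus H$; the Frobenius conclusion requires the additional hypothesis that $|H|$ is odd. So your write-up is as complete as the statement allows, and your closing caveat correctly locates the defect --- the fix is to amend the statement, not to supply a missing argument.
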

\begin{proof}
Let $x$ be an element in $G\setminus H$ of order $m$. Set
$\Omega_x=\{y \ | \ \langle y\rangle=\langle x\rangle\}\cup \{1\}$.
Clearly, $\Omega_x\cap H=1$ and $|\Omega_x|=\phi(m)+1$. Note that the induced subgraph
$\mathcal{P}(G, \Omega_x)$  is a complete graph, that is
$\mathcal{P}(G, \Omega_x)=K_{\phi(m)+1}$. Now, if $T_{\Omega_x}$  and $T_H$
are two arbitrary spanning trees of $\mathcal{P}(G, \Omega_x)$ and
$\mathcal{P}(G, H)$, respectively, then $T_G=T_{\Omega_x} \cup T_H\cup
1_{G\setminus(\Omega_x\cup H)}$ is a spanning tree of $\mathcal{P}(G)$.
Thus, by product rule the number of such spanning trees of
$\mathcal{P}(G)$ is equal to
$$\kappa_{\cal P}(\Omega_x)\cdot \kappa_{\cal P}(H)\cdot 1=(\phi(m)+1)^{\phi(m)-1}\kappa_{\cal P}(H). \ \ \ \mbox{(by
Cayley's formula)}$$ This shows that the following inequality
holds: $$\kappa_{\cal P}(G)\geqslant (\phi(m)+1)^{\phi(m)-1}\kappa_{\cal P}(H),$$
as required. Finally, since for each positive integer $m$,
$(\phi(m)+1)^{\phi(m)-1}\geqslant 1$,  it follows that $\kappa_{\cal P}(G)\geqslant
\kappa_{\cal P}(H)$.

The preceding argument suggests how to construct a spanning tree
of $\mathcal{P}(G)$ through a spanning tree of $\mathcal{P}(G, H)$. In fact,
if $T_H$ is a spanning tree of $\mathcal{P}(G, H)$, then $T_G=T_H\cup
1_{G\setminus H}$ is a spanning tree of
$\mathcal{P}(G)$, which leads again to the inequality
$\kappa_{\cal P} (G)\geqslant \kappa_{\cal P}(H)$. Moreover, the equality holds if and only
if $\mathcal{P}(G, G\setminus H)$ is a null graph and there are no edges between vertices in $G\setminus H$ and $H^\#$. We argue under these conditions that $G$ is a Frobenius group
with kernel $H$ and complement $C$ of order $2$. We first observe
that every element in $G\setminus H$ is an involution. Also,
for all $x\in G\setminus H$ and $h\in H$, $xh\in G\setminus H$
and so  $(xh)^2=1$, or equivalently $x^{-1}hx=h^{-1}$. This
shows that $H$ is a normal subgroup of $G$ and the cyclic subgroup
$C=\langle x \rangle$ of order 2 acts on $H$ by conjugation which
induces a fixed-point-free automorphism of $H$. Hence, $G=HC$ is
a Frobenius group with kernel $H$ and complement $C$, as required.
\end{proof}

A group $G$ from a class $\mathcal{F}$ is said to be recognizable
in $\mathcal{F}$ by $\kappa_{\cal P} (G)$ (shortly, $\kappa_{\cal P} $-recognizable in
$\mathcal{F}$) if every group $H \in \mathcal{F}$ with $\kappa_{\cal P} 
(H)=\kappa_{\cal P}  (G)$ is isomorphic to $G$. In other words, $G$ is
$\kappa_{\cal P} $-recognizable in $\mathcal{F}$ if $h_{\mathcal{F}}(G)=1$,
where $h_{\mathcal{F}}(G)$ is the (possibly infinite) number of
pairwise non-isomorphic groups $H\in \mathcal{F}$ with
$\kappa_{\cal P} (H)=\kappa_{\cal P} (G)$. We denote by 
$\mathcal{S}$ the classes of all finite
simple groups. In the sequel, we show that the simple group $L_2(7)\cong L_3(2)$  is 
$\kappa_{\cal P} $-recognizable group in class $\mathcal{S}$, in other words  $h_{\mathcal{S}}(L_2(7))=1$. 

\begin{theorem}\label{th1-new} The simple group $L_2(7)$ is
$\kappa_{\cal P} $-recognizable in the class $\mathcal{S}$ of all finite simple groups,
that is, $h_{\mathcal{S}}(L_2(7))=1$.
\end{theorem}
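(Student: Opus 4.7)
My plan is first to compute $\kappa_{\cal P}(L_2(7))$ explicitly, and then to show that no other finite simple group can match this value. The group $L_2(7)$ has order $168=2^3\cdot 3\cdot 7$ and element orders in $\{1,2,3,4,7\}$. It possesses $8$ Sylow $7$-subgroups (cyclic of order $7$), $28$ Sylow $3$-subgroups (cyclic of order $3$), and $21$ cyclic subgroups of order $4$ (the unique $C_4$ inside each of its $21$ dihedral Sylow $2$-subgroups). Since every involution is the center of a unique such $D_8$ it belongs to a unique $C_4$, so these maximal cyclic subgroups partition $L_2(7)^\#$, giving
\[
\mathcal{P}(L_2(7)^\#)=8K_6\oplus 28K_2\oplus 21K_3.
\]
Applying Lemma \ref{elementary0} to $\mathcal{P}(L_2(7))=K_1\vee\mathcal{P}(L_2(7)^\#)$, the nonzero Laplacian eigenvalues are $168$, $7$ (multiplicity $40$), $4$ (multiplicity $42$), $3$ (multiplicity $28$), and $1$ (multiplicity $56$); formula (\ref{eq1}) then yields $\kappa_{\cal P}(L_2(7))=7^{40}\cdot 4^{42}\cdot 3^{28}=2^{84}\cdot 3^{28}\cdot 7^{40}$.

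Now let $H$ be any finite simple group with $\kappa_{\cal P}(H)=2^{84}\cdot 3^{28}\cdot 7^{40}$. For each prime $p\in\pi(H)$, Lemmas \ref{marty} and \ref{th-4} give the bound $\kappa_{\cal P}(H)\geq p^{(p-2)(p+1)}$; since $11^{108}>2^{84}\cdot 3^{28}\cdot 7^{40}$, this yields $\pi(H)\subseteq\{2,3,5,7\}$. To rule out $5\in\pi(H)$, observe that if $H$ has no element of order $5m$ with $m>1$ then the $\geq 6$ cyclic $C_5$'s supplied by Lemma \ref{marty} form isolated $K_4$'s in $\mathcal{P}(H^\#)$, and Lemma \ref{elementary0} forces $5^{18}\mid\kappa_{\cal P}(H)$, contradicting the absence of $5$ in $2^{84}\cdot 3^{28}\cdot 7^{40}$. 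Otherwise $H$ has some element of order $5m$ with $m>1$, and the classification of finite simple groups reduces us to checking finitely many candidates, namely $A_n$ $(5\le n\le 10)$, $L_2(49)$, $L_3(4)$, $U_3(5)$, $U_4(2)$, $U_4(3)$, $S_4(7)$, $S_6(2)$, $O^+_8(2)$, and $J_2$; for each I would identify the connected component of $\mathcal{P}(H^\#)$ containing a Sylow $5$-subgroup and extract a Laplacian eigenvalue $\mu$ with $5\mid\mu+1$, again forcing $5\mid\kappa_{\cal P}(H)$.

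With $5$ eliminated we have $\pi(H)\subseteq\{2,3,7\}$, and since nonabelian simple groups have $|\pi|\geq 3$ by Burnside's $p^aq^b$-theorem, $\pi(H)=\{2,3,7\}$. The classification of simple $K_3$-groups leaves $H\in\{L_2(7),L_2(8),U_3(3)\}$. A computation like the one for $L_2(7)$ above, now using that $L_2(8)$ has element orders $\{1,2,3,7,9\}$ with $\mathcal{P}(L_2(8)^\#)=28K_8\oplus 36K_6\oplus 63K_1$, yields $\kappa_{\cal P}(L_2(8))=9^{196}\cdot 7^{180}=3^{392}\cdot 7^{180}$; the $36$ Sylow $7$-subgroups of $U_3(3)$ (no element of $U_3(3)$ has order $14$ or $21$) force $7^{180}\mid\kappa_{\cal P}(U_3(3))$, which is already far too large. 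Neither matches $2^{84}\cdot 3^{28}\cdot 7^{40}$, so $H\cong L_2(7)$.

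The principal obstacle is the elimination of $5\in\pi(H)$ when the cyclic $C_5$'s are fused with larger cyclic subgroups (via elements of order $10, 15, 20,$ or $25$) into larger components of $\mathcal{P}(H^\#)$; in that regime the Laplacian spectrum is no longer a clean union of $K_4$ contributions, and one is driven into a case-by-case inspection of the finitely many simple $\{2,3,5,7\}$-groups containing $5$. The argument is tractable precisely because the list of candidates is short and in every case at least one connected component of $\mathcal{P}(H^\#)$ passing through an element of order $5$ carries an eigenvalue $\mu$ with $\mu+1$ divisible by $5$.
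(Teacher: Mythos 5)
Your overall architecture matches the paper's: reduce to $\pi(H)\subseteq\{2,3,5,7\}$ via Lemmas \ref{marty} and \ref{th-4} and the bound $11^{108}>2^{84}\cdot 3^{28}\cdot 7^{40}$, then run through the finite list of simple groups with this prime spectrum. Your opening computation of $\kappa_{\cal P}(L_2(7))$ from the decomposition $\mathcal{P}(L_2(7))=K_1\vee(8K_6\oplus 28K_2\oplus 21K_3)$ is correct and is a nice self-contained addition (the paper simply cites the value from \cite{kmssz}). Your divisibility idea --- that $c$ isolated $K_4$ components coming from Sylow $5$-subgroups force $5^{3c}\mid\kappa_{\cal P}(H)$ --- is genuinely different from the paper and works cleanly whenever $H$ has no element of order $5m$ with $m>1$.

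The genuine gap is exactly where you flag it: the candidates that \emph{do} have elements of order $10$, $15$ or $25$ (e.g.\ $A_8$, $A_9$, $A_{10}$, $J_2$, $U_3(5)$, $S_6(2)$, $O_8^+(2)$, $S_4(7)$, $L_2(49)$), which is most of the list. For these you propose to ``extract a Laplacian eigenvalue $\mu$ with $5\mid\mu+1$'' from the component of $\mathcal{P}(H^\#)$ through a $5$-element, but this is not a proof: the Laplacian eigenvalues of such a component need not be integers, and even granting that the contribution $\prod_i(\mu_i+1)$ of each component to $\kappa_{\cal P}(H)$ is an integer (it is, being $\kappa(K_1\vee\Gamma_j)$), nothing you have said shows it is divisible by $5$. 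So the claim at the end of your last paragraph is an unverified hope, and carrying out the ``case-by-case inspection'' for components containing elements of orders $10$ and $15$ (as in $J_2$ or $S_6(2)$) would be substantial work. The paper avoids this entirely: it observes that every candidate other than $A_5$ and $L_2(7)$ contains a subgroup isomorphic to $A_6$, whence $\kappa_{\cal P}(H)\geqslant\kappa_{\cal P}(A_6)=2^{180}\cdot 3^{40}\cdot 5^{108}$, which already exceeds $2^{84}\cdot 3^{28}\cdot 7^{40}$; the monotonicity $\kappa_{\cal P}(G)\geqslant\kappa_{\cal P}(H)$ for $H\leqslant G$ (Lemma \ref{lm-1-frob}) does all the work in one stroke. (A repair within your framework for the troublesome cases would be to apply Lemma \ref{th-4} to the many pairwise trivially intersecting cyclic subgroups of order $5$ or $25$ to get a lower bound exceeding the target, rather than chasing divisibility by $5$.) Your treatment of the $\{2,3,7\}$-groups $L_2(8)$ and $U_3(3)$ at the end is fine in substance, although $U_3(3)$ has $288$ Sylow $7$-subgroups rather than $36$; this only strengthens your conclusion there.
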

\begin{proof}
Let $G\in \mathcal{S}$ with $\kappa_{\cal P} (G)=\kappa_{\cal P} (L_2(7))=2^{84}\cdot 3^{28}\cdot 7^{40}$ (see {\rm \cite[Theorem 4.1]{kmssz}}). 
We have to prove that $G$ is isomorphic to $L_2(7)$. 
Clearly, $G$ is nonabelian, 
since otherwise $G\cong \mathbb{Z}_p$ for some prime $p$, and so
 $\kappa_{\cal P} (G)=\kappa_{\cal P} (\mathbb{Z}_p)=p^{p-2}$, which is a
contradiction. Now, we claim that $\pi(G)\subseteq \{2, 3, 5, 7\}$.  Suppose $p\in \pi(G)$
and $p\geqslant 11$. Let $c_p$ be the number
of cyclic subgroups of order $p$ in $G$. By Lemma \ref{marty},
$c_p\geqslant p+1$, because $G$ is a nonabelian simple group.
Therefore, from Lemma \ref{th-4}, we deduce that
$$\kappa_{\cal P} (G)\geqslant \kappa_{\cal P} (\mathbb{Z}_p)^{c_p} \geqslant
\kappa_{\cal P} (\mathbb{Z}_p)^{p+1}=p^{(p-2)(p+1)}\geqslant
11^{108}>\kappa_{\cal P} (G),$$ which is a contradiction. This shows that 
$\pi(G)\subseteq \{2, 3, 5, 7\}$, as claimed.

By results collected in  \cite[Table 1]{zav}, $G$ is isomorphic
to one of the groups $A_5\cong L_2(4)\cong L_2(5)$, $A_6\cong L_2(9)$, $S_4(3)\cong U_4(2)$, 
$L_2(7)\cong L_3(2)$, $L_2(8)$, $U_3(3)$, $A_7$, $L_2(49)$, $U_3(5)$, 
$L_3(4)$, $A_8\cong L_4(2)$, $A_9$, $J_2$, $A_{10}$, $U_4(3)$,
$S_4(7)$, $S_6(2)$ or $O_8^+(2)$.
In all cases, except  $A_5$ and  $L_2(7)$, $G$ 
contains a subgroup $H$ which is isomorphic to $A_6$ (see
\cite{atlas}). But then, $\kappa_{\cal P}(G)\geqslant \kappa_{\cal P}(H)=2^{180}\cdot 3^{40}\cdot 5^{108}$,  a contradiction.
If $G$ is isomorphic to $A_5$, then $\kappa_{\cal P}(G)=2^{20}\cdot  3^{10}\cdot  5^{18}$,  which contradicts the assumption. Thus $G$  is isomorphic to $L_2(7)$,  as required.
\end{proof}

%%%%%%%%%%%%%%%%%%%%%%%%%%%%%%%%%%%%%%%%%%%%%%%%

\subsection{Power-Free Decompositions} 
A generalization of split graphs was introduced and investigated under the name $(m, n)$-graphs in \cite{Brandstadt}. 
 A graph $\Gamma$ is an $(m,n)$-graph if its vertex set can be partitioned into $m$ cliques $C_1, \ldots, C_m$ and $n$ independent sets $I_1, \ldots, I_n$.  In this situation, 
 $$V_\Gamma=C_1\uplus C_2\uplus\cdots\uplus C_m\uplus I_1\uplus I_2\uplus\cdots\uplus I_n,$$
 is called an $(m,n)$-split partition of $\Gamma$. Thus, $(m,n)$-graphs are a natural generalization of split graphs, which are precisely $(1,1)$-graphs. 

Accordingly, we are motivated to make the following definition.

\begin{definition}\label{def2} {\rm Let $G$ be a group and $n\geqslant 1$ an integer.
We say that $G$ has an {\em$n$-power-free decomposition} if it
can be partitioned as a disjoint union of  a cyclic $p$-subgroup $C$ of maximal order 
and $n$ nonempty subsets $B_1, B_2, \ldots, B_n$:
\begin{equation}\label{eq2} G=C\uplus B_1\uplus B_2\uplus \cdots \uplus B_n,\end{equation}
such that  the $B_i$'s are independent sets in ${\cal P}(G)$ and $|B_i|>1$, for each $i$. 
If $n=1$, we simply say $G=C\uplus B_1$ is a {\em power-free decomposition} of $G$.}
\end{definition}

Since $C$ is a cyclic $p$-subgroup  of maximal order in Definition \ref{def2}, $C$ is a clique, and so 
 Eq (\ref{eq2}) is a $(1,n)$-split partition of $\mathcal{P}(G)$.
Note that, there are some finite groups which do not have an $n$-power-free decomposition,
for any $n$, for example one can consider cyclic groups (see Proposition \ref{non-existence}).  On the other hand,  the structure of groups $G$ which have
a power-free decomposition is obtained (see Theorem \ref{th-1}).

\begin{lm}\label{lm-eee0}
Suppose $G$ has an $n$-power-free decomposition:
 $$G=C \uplus B_1 \uplus B_2\uplus \cdots \uplus B_n,$$ 
where $C$ is a cyclic $p$-subgroup of $G$. Then the following statements hold:
\begin{itemize}
\item[{\rm (a)}] If $b\in G\setminus C$, then $\phi(o(b))\leqslant n$. In particular, we have $$\pi(G)\subseteq \pi((n+1)!)\cup \{p\}.$$
\item[{\rm (b)}] If $p\notin \pi((n+1)!)$, then $C$ is  normal and $C_C(b)=1$
for each $b\in G\setminus C$. In particular, $Z(G)=1$.
\item[{\rm (c)}] The set  of universal vertices of $\mathcal{P}(G)$ is contained in $C$. 
\end{itemize}
\end{lm}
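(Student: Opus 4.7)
The plan is to handle parts (a), (b), (c) in order, with the common engine being the observation that every cyclic subgroup $\langle x\rangle$ of order $m$ contributes a $\phi(m)$-clique to $\mathcal{P}(G)$ (its set of generators), and such a clique is forced to spread one element at a time across the independent sets $B_1,\ldots,B_n$.

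For part (a), I fix $b\in G\setminus C$ of order $m=o(b)$ and consider $\Omega_b=\{y\in G:\langle y\rangle=\langle b\rangle\}$. Any two elements of $\Omega_b$ are powers of one another, so $\Omega_b$ is a clique of size $\phi(m)$ in $\mathcal{P}(G)$. If some $y\in\Omega_b$ lay in $C$, then $\langle b\rangle=\langle y\rangle\subseteq C$ would force $b\in C$; hence $\Omega_b\subseteq B_1\cup\cdots\cup B_n$. Since each $B_i$ is independent, $|\Omega_b\cap B_i|\le 1$, so $\phi(m)=|\Omega_b|\le n$. The inclusion $\pi(G)\subseteq\pi((n+1)!)\cup\{p\}$ then follows by applying Cauchy's theorem to each prime $q\in\pi(G)\setminus\{p\}$: any element of order $q$ lies outside the $p$-group $C$, hence $q-1=\phi(q)\le n$, i.e., $q\le n+1$.

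For part (b), the hypothesis $p\notin\pi((n+1)!)$ reads as $p>n+1$. If some $b\in G\setminus C$ had order divisible by $p$, then $\phi(o(b))\ge p-1>n$ would contradict (a); so every $p$-element of $G$ lies in $C$. In particular $C$ contains every Sylow $p$-subgroup, forcing $C$ itself to be the unique Sylow $p$-subgroup, hence normal. For the centraliser statement I argue by contradiction: given $b\in G\setminus C$ and a nontrivial $c\in C_C(b)$, the cyclicity of $C$ furnishes a power $c'$ of $c$ of order exactly $p$. Since $c'$ and $b$ commute with coprime orders, $\langle c'b\rangle$ is cyclic of order $p\cdot o(b)$; its $\phi(p\cdot o(b))\ge p-1$ generators form a clique and are not $p$-elements, so none lie in $C$. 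The argument of (a) then gives $p-1\le n$, contradicting $p>n+1$. The assertion $Z(G)=1$ drops out afterwards: a nontrivial central element in $C$ would lie in $C_C(b)$ for any $b\in G\setminus C$, while a nontrivial central element $z\in G\setminus C$ would satisfy $C\subseteq C_G(z)$ and therefore $C=C_C(z)=1$, contradicting $p\in\pi(C)$.

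Part (c) I dispatch directly: if $u$ were a universal vertex of $\mathcal{P}(G)$ lying outside $C$, then $u\in B_i$ for some $i$, and because $|B_i|>1$ we could pick $v\in B_i\setminus\{u\}$; independence of $B_i$ would make $u$ and $v$ non-adjacent, contradicting the universality of $u$. The main obstacle is the $C_C(b)=1$ step in (b), because it is the only place where the purely combinatorial bound from (a) has to be combined with a group-theoretic construction—manufacturing a strictly larger cyclic subgroup out of a commuting pair of the right orders—in order to reach a contradiction. Everything else collapses to the generator-clique observation used in (a) or to elementary Sylow/Cauchy considerations.
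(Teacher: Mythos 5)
Your proof is correct and follows essentially the same route as the paper: part (a) via the clique formed by the generators of $\langle b\rangle$ being forced into distinct $B_i$'s, part (b) by producing an element of $G\setminus C$ whose order is divisible by $p$ (hence with $\phi$-value at least $p-1>n$) to contradict (a), and part (c) directly from $|B_i|>1$. The only cosmetic difference is in the normality step, where you deduce that $C$ is the unique Sylow $p$-subgroup from the fact that all $p$-elements lie in $C$, whereas the paper conjugates a generator of $C$ and applies (a) to the conjugate; both are immediate consequences of the same bound.
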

\begin{proof} (a)  The first  statement follows immediately from the fact that the set of  generators of cyclic group  $\langle b\rangle$, which has $\phi(o(b))$ elements, forms a complete set in the $\mathcal{P}(G, G\setminus C)$, and hence each $B_i$ contains at most one of the generators. The second statement is also clear, because for each $q\in \pi(G)\setminus \{p\}$,  there exists an element  $b\in G\setminus C$
of order $q$, and so by first part $\phi(q)=q-1\leqslant n$, or $q\leqslant n+1$. 

(b) Assume the contrary.  Let $C=\langle x\rangle$ with $o(x)=p^m>1$.  Then, there exists $b\in G\backslash C$ such that $x^b\notin C$. By part (a) it follows that $\phi(o(x^b))\leqslant n$. Since $o(x^b)=o(x)$, 
$\phi(o(x^b))=\phi(o(x))=\phi(p^m)=p^{m-1}(p-1)$, and so we obtain
$$p-1\leqslant p^{m-1}(p-1)\leqslant n.$$
This forces $p\leqslant n+1$, which contradicts the hypothesis.

Let $b\in G\setminus C$.  Suppose $c$ in $C$ is not the identity and commutes with $b$.
Replacing $b$ by an appropriate power, we may assume without loss that $p$ divides $o(bc)$.  
Thus, we conclude that $p-1$ divides $\phi(o(bc))$.  Since  
$bc\in G\setminus C$, by part (a) we have $\phi(o(bc))\leqslant n$.
Thus, it follows that $p-1\leqslant n$, which is a contradiction.  This shows that  $C_C(b)=1$, as required.

(c) It is clear from Definition \ref{def2}.
\end{proof}

As the following result shows that there are some examples of groups for which there does not exist any $n$-power-free decomposition.
\begin{proposition}\label{non-existence}
Any cyclic group has no  $n$-power-free decomposition.
\end{proposition}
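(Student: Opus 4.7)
The plan is to argue by contradiction: assume a cyclic group $G$ admits an $n$-power-free decomposition
$$G=C\uplus B_1\uplus\cdots\uplus B_n,$$
as in Definition \ref{def2}, and split into the cases where $|G|$ is or is not a prime power.

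First I would handle the prime-power case. If $|G|=p^k$, Lemma \ref{complete-CSS}(2) says $\mathcal{P}(G)$ is a complete graph, so every independent set has at most one element. Since Definition \ref{def2} stipulates $n\geqslant 1$ and $|B_i|>1$ for each $i$, this is already a contradiction.

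For the non-prime-power case, the key observation will be that every generator of $G$ is a universal vertex of $\mathcal{P}(G)$: if $g$ generates $G$, then every element of $G$ is literally a power of $g$, so $g$ is adjacent to every other vertex. I would then invoke Lemma \ref{lm-eee0}(c), which places the set of universal vertices of $\mathcal{P}(G)$ inside $C$. So every generator of $G$ must lie in $C$; but $C$ is a cyclic $p$-subgroup, so all of its elements have $p$-power order, whereas a generator of $G$ has order $|G|$, which is not a prime power by assumption. This contradiction finishes the case.

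I do not expect any real obstacle here; the argument is a short two-case split that rides entirely on Lemma \ref{complete-CSS}(2) and Lemma \ref{lm-eee0}(c). The only care needed is to remember that in the prime-power case the contradiction uses the stipulation $|B_i|>1$ (rather than mere nonemptiness of the $B_i$), and in the non-prime-power case that it is crucial that $C$ is required to be a \emph{$p$}-subgroup, since this is precisely what is incompatible with the order of a generator of $G$.
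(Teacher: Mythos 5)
Your proof is correct and rests on the same key step as the paper's: a generator $x$ of $G$ is a universal vertex of $\mathcal{P}(G)$, so Lemma \ref{lm-eee0}(c) forces $x\in C$. The case split on whether $|G|$ is a prime power is unnecessary, though: once $x\in C$ you get $G=\langle x\rangle\subseteq C$, i.e.\ $C=G$, which already contradicts the nonemptiness of $B_1$ in both cases, and this is exactly how the paper concludes.
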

\begin{proof}
Assume the contrary and let $G=\langle x\rangle$ be a cyclic group with an $n$-power-free decomposition:
$$G=C\uplus B_1 \uplus B_2\uplus \cdots \uplus B_n,$$  for some $n\geqslant 1$, 
where $C\subset G$ is a cyclic $p$-subgroup. Clearly, $x$ is a universal vertex in $\mathcal{P}(G)$, and so by Lemma \ref{lm-eee0} (c),  $x\in C$. But then $C=G$, which is a contradiction.  The proof is complete.
\end{proof}

\begin{proposition}\label{existence}
The generalized quaternion
group $Q_{2^n}$, $n\geqslant 3$, has a  $2$-power-free decomposition.
\end{proposition}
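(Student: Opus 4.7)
The plan is to exhibit an explicit $2$-power-free decomposition. Fix the standard presentation $Q_{2^n}=\langle a,b\mid a^{2^{n-1}}=1,\ b^2=a^{2^{n-2}},\ b^{-1}ab=a^{-1}\rangle$, and take $C:=\langle a\rangle$, a cyclic $2$-subgroup of order $2^{n-1}$. I would first verify that $C$ has maximal order among cyclic $p$-subgroups of $Q_{2^n}$: every element of $Q_{2^n}\setminus C$ has the form $a^ib$, and a direct squaring using $b^2=a^{2^{n-2}}$ and $ba=a^{-1}b$ shows that each such element has order $4$, which is $\leq 2^{n-1}$ for all $n\geq 3$.

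Next I would pin down the structure of $\mathcal{P}(Q_{2^n})$ on the complement $Q_{2^n}\setminus C=\{a^ib:0\leq i<2^{n-1}\}$. Using the defining relations one gets $(a^ib)^{-1}=a^{i+2^{n-2}}b$, so
\[
\langle a^ib\rangle=\{1,\,a^ib,\,a^{2^{n-2}},\,a^{i+2^{n-2}}b\}.
\]
Thus two elements $a^ib$ and $a^jb$ are powers of each other exactly when $j\equiv i$ or $j\equiv i+2^{n-2}\pmod{2^{n-1}}$. Consequently, the induced power graph on $Q_{2^n}\setminus C$ is a perfect matching with the $2^{n-2}$ edges $\{a^ib,\,a^{i+2^{n-2}}b\}$ for $0\leq i<2^{n-2}$.

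Finally, I would $2$-color this matching by setting
\[
B_1:=\{a^ib:0\leq i<2^{n-2}\},\qquad B_2:=\{a^ib:2^{n-2}\leq i<2^{n-1}\}.
\]
Each $B_j$ picks at most one vertex from every matching edge, hence is independent in $\mathcal{P}(Q_{2^n})$; both are nonempty, with $|B_j|=2^{n-2}\geq 2$ since $n\geq 3$; and $Q_{2^n}=C\uplus B_1\uplus B_2$ by construction. This is the required $2$-power-free decomposition in the sense of Definition~\ref{def2}.

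There is no serious obstacle. The only step that requires genuine care is the matching description of $\mathcal{P}(Q_{2^n},\,Q_{2^n}\setminus C)$, which rests on the elementary fact that every element outside the cyclic maximal subgroup of $Q_{2^n}$ has order $4$ and squares to the unique involution $a^{2^{n-2}}$; once this is in hand, the partition into $B_1$ and $B_2$ is immediate.
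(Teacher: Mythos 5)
Your proposal is correct and uses exactly the same decomposition as the paper ($C=\langle a\rangle$ together with the two halves $\{a^ib: 0\leq i<2^{n-2}\}$ and $\{a^ib: 2^{n-2}\leq i<2^{n-1}\}$); the paper simply exhibits these sets without further comment, whereas you additionally verify independence via the perfect-matching structure of the power graph on $Q_{2^n}\setminus C$. No issues.
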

\begin{proof}  With the following presentation:
$$Q_{2^n}=\langle x, y \ | \ x^{2^{n-1}}=1, y^2=x^{2^{n-2}}, x^y=x^{-1}\rangle,$$
we may choose $C=\langle x\rangle$, and
$$ B_1=\{y, xy, \ldots, x^{2^{n-2}-1}y\},  \ B_2=\{x^{2^{n-2}}y,  x^{2^{n-2}+1}y, \ldots, x^{2^{n-1}-1}y\}.$$
Then  
$Q_{2^n}=C\uplus B_1 \uplus B_2$ is a $2$-power-free decomposition,  and this
completes the proof.
\end{proof}

Given a group $G$, $1\in G$ is a universal vertex of the power graph 
$\mathcal{P}(G)$. Now, as an immediate corollary of Lemma \ref{universal} and Propositions \ref{non-existence} and \ref{existence}, we have the following.

\begin{corollary}\label{universal-de} 
Let $G$ be a  group,  $S$  the set of universal vertices of the power graph $\mathcal{P}(G)$, and $|S|>1$.  Then $G$ has an $n$-power-free decomposition iff  
$G$ is isomorphic to a generalized quaternion
group. \end{corollary}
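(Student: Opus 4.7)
The plan is to argue both implications by directly combining the three stated results, since each direction reduces to a case analysis via Lemma \ref{universal}.

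For the forward direction, assume $G$ admits an $n$-power-free decomposition and $|S|>1$. The hypothesis $|S|>1$ lets me invoke Lemma \ref{universal}, which restricts $G$ to exactly three possibilities: (a) cyclic of prime power order, (b) cyclic of non-prime-power order, or (c) generalized quaternion. Cases (a) and (b) are both ruled out by Proposition \ref{non-existence}, which says no cyclic group admits an $n$-power-free decomposition. Hence only (c) remains, so $G$ must be isomorphic to some $Q_{2^n}$.

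For the reverse direction, assume $G\cong Q_{2^n}$ with $n\geqslant 3$. Proposition \ref{existence} furnishes an explicit $2$-power-free decomposition, which in particular is an $n$-power-free decomposition for $n=2$. Moreover, case (c) of Lemma \ref{universal} guarantees that $|S|=2>1$ for generalized quaternion groups (the universal vertices being $1$ and the unique involution), so the side condition $|S|>1$ is automatically satisfied.

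There is no substantive obstacle here, as the corollary is essentially a bookkeeping assembly of Lemma \ref{universal}, Proposition \ref{non-existence}, and Proposition \ref{existence}; the only thing I need to be careful about is noting that in the forward direction the value of $n$ in the decomposition is not constrained in advance (so I must quote Proposition \ref{non-existence} in its form ``for any $n\geqslant 1$''), and in the reverse direction I must point out that Proposition \ref{existence} supplies the required $n$ (namely $n=2$), so the existential statement ``has an $n$-power-free decomposition'' indeed holds.
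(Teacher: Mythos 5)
Your proof is correct and follows exactly the route the paper intends: the paper states this corollary as an immediate consequence of Lemma \ref{universal} together with Propositions \ref{non-existence} and \ref{existence}, which is precisely the assembly you carry out. Your extra care about the quantifier on $n$ (any $n$ in the forward direction, the explicit $n=2$ from Proposition \ref{existence} in the reverse direction) is a reasonable point of bookkeeping that the paper leaves implicit.
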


\begin{theorem}\label{th-1} The following conditions on a group $G$ are equivalent:
\begin{itemize}
\item[{\rm (a)}] $G$ has a power-free decomposition,  $G=C\uplus B$, where $C$ is a cyclic $p$-subgroup of $G$. 
\item[{\rm (b)}] One of the following statements holds:  
\begin{itemize}
\item[{\rm (1)}] $p=2$ and $G$ is an elementary abelian $2$-group of order $\geqslant 4$.
\item[{\rm (2)}] $p=2$ and $G$ is the dihedral group $D_{2^m}$ of order $2^m$, for some integer $m\geqslant 3$.
\item[{\rm (3)}] $p>2$ and $G$ is  the dihedral group $D_{2p^n}$ (a Frobenius group) of order $2p^n$ with a cyclic kernel of order $p^n$.
\end{itemize}
\end{itemize}
\end{theorem}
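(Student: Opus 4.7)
The plan is to prove (b) $\Rightarrow$ (a) by exhibiting decompositions and to handle the converse via Lemma \ref{lm-eee0} applied with $n = 1$, followed by a short case analysis on $p$. For (b) $\Rightarrow$ (a), in each family take $C$ to be a cyclic $p$-subgroup of maximal order — a subgroup of order $2$ in case (1) and the cyclic subgroup of index $2$ in cases (2) and (3) — and set $B := G \setminus C$. The elements of $B$ are then all involutions (immediate in case (1); the reflections in the dihedral cases), so Corollary \ref{cor1} (1) tells us $B$ is independent in $\mathcal{P}(G)$, and $|B| > 1$ is clear in each case.

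For (a) $\Rightarrow$ (b), Lemma \ref{lm-eee0} (a) with $n = 1$ forces $\phi(o(b)) \leq 1$ for every $b \in B$, so every $b \in B$ is an involution and $\pi(G) \subseteq \{2, p\}$. If $p = 2$ and $|C| = 2$, then $G$ has exponent $2$, so $G$ is elementary abelian with $|G| \geq 4$, giving (1). If $p = 2$ and $|C| = 2^k$ with $k \geq 2$, fix a generator $x$ of $C$. For $b \in B$, the element $xb$ cannot lie in $C$, so $xb \in B$ is also an involution; expanding $(xb)^2 = 1$ yields $b^{-1} x b = x^{-1}$. Hence every $b \in B$ inverts $x$, and $C \trianglelefteq G$. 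For $b_1, b_2 \in B$ the product $b_1 b_2$ centralizes $x$; since every element of $B$ inverts $x$ and $x \neq x^{-1}$, this forces $b_1 b_2 \in C$. Thus $B$ is a single coset of $C$, $|G| = 2^{k+1}$, and the relations identify $G$ with $D_{2^{k+1}}$ for some $k + 1 \geq 3$, which is (2).

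For $p$ odd, Lemma \ref{lm-eee0} (b) gives $C \trianglelefteq G$ and $C_C(b) = 1$ for every $b \in B$; since $\mathrm{Aut}(C)$ is cyclic of order $p^{n-1}(p-1)$, its unique involution is inversion, so each $b \in B$ inverts $C$ pointwise. The same coset argument — $b_1 b_2$ centralizes $x$ while every element of $B$ inverts it — shows $B$ is a single coset, hence $|G| = 2 p^n$ and $G \cong D_{2 p^n}$; the fixed-point-free action of the complement on $C$ makes this Frobenius, giving (3). The main obstacle I expect is the $p = 2$, $|C| \geq 4$ case, where one must rule out the generalized quaternion family (which, by Proposition \ref{existence}, admits a $2$-power-free but not a $1$-power-free decomposition); the involution-plus-coset argument above achieves this by forcing every element outside $C$ to be an involution inverting a generator, rather than an element of order $4$ squaring into $C$, which in turn pins the index of $C$ at exactly $2$.
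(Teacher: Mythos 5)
Your proof is correct, and it diverges from the paper's argument at the one genuinely delicate point: showing that $C$ has index $2$ in the dihedral cases. The paper first gets $C\trianglelefteq G$ and $G/C$ elementary abelian, then assumes $[G:C]=2^t$ with $t\geqslant 2$ and counts involutions, obtaining $|{\rm Inv}(G)|\geqslant \frac{3}{4}|G|+1$; this forces $G$ abelian by the classical ``at least $3/4$ of the elements have order two'' theorem, after which $bx$ being a non-involution outside $C$ gives a contradiction. In the odd case the paper instead runs the chain $C=C_G(C)$ and $G/C=N_G(C)/C_G(C)\hookrightarrow {\rm Aut}(C)$ with ${\rm Aut}(C)$ cyclic. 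You replace both of these with a single uniform and more elementary device: every $b\in B$ inverts the generator $x$ of $C$ (via $(xb)^2=1$ when $p=2$, via the unique involution of the cyclic group ${\rm Aut}(C)$ together with $C_C(b)=1$ when $p$ is odd), so for $b_1,b_2\in B$ the product $b_1b_2$ centralizes $x$ while no element of $B$ does (since $x\neq x^{-1}$ once $|C|>2$), whence $b_1b_2\in C$ and $B$ is a single coset of $C$. This is self-contained, avoids the $3/4$-involutions theorem entirely, and treats the $p=2$ and $p$ odd cases in parallel; the paper's counting argument, by contrast, makes the obstruction ``why not index $4$ or more'' very visible but at the cost of importing an external classification fact. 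Both routes rest on Lemma \ref{lm-eee0}(a) to see that everything outside $C$ is an involution, and your explicit treatment of (b)$\Rightarrow$(a), which the paper dismisses as obvious, is fine.
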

\begin{proof}  ${\rm (a)\Rightarrow(b)}$. Suppose $G=C\uplus B$ is a
power-free decomposition of $G$, where $C\subset G$ is a cyclic $p$-subgroup of maximal order. It follows by Lemma \ref{lm-eee0} (a) that every element $b\in B$ is an involution, and also $|G|=2^mp^n$, for some odd prime $p$ and $m\geqslant 1$, $n\geqslant 0$.
We shall treat the cases $n=0$  and $n\geqslant 1$, separately.

{\bf Case 1.}   {\em $n=0$.}  In this case, $G$ is a $2$-group. If $|C|\leqslant 2$, then $G$ is an elementary abelian $2$-group and (1) holds. We may now assume that $|C|>2$.  Put $C=\langle x\rangle$. 
Then, for every $b$ in $B$, $x^b$ is not an involution and so $x^b\in C$, which shows that $C$ is a normal subgroup of $G$.  Thus $G/C$
is an elementary abelian $2$-group by the previous paragraph. 

We now claim that $[G:C]=2$. To prove this, we  assume that $[G:C]=2^t$, where $t\geqslant 2$. 
Let $I={\rm Inv}(G)$ be the set of involutions of $G$. Then, we have  $I=B\cup \{z\}$, where $z$ is the unique involution in $C$, and so
$$|I|=|B|+1=|G|-|C|+1=|G|-\frac{|G|}{2^t}+1=\left(\frac{2^t-1}{2^t}\right)|G|+1\geqslant \frac{3}{4}|G|+1,$$
which  forces $G$ to be an abelian group. We recall that, a finite group is abelian if at least $3/4$ of its
 elements have order two. But then, if $b\in B$, then $bx$ is not an involution and also  $bx\notin C$, which is a contradiction. 
 
Let $b$ be an involution in $B$. Then $G=\langle x, b\rangle$.
Since $bx\in B$, $bx$ is an involution, and thus $bxb=x^{-1}$, which implies that  $G$ is a dihedral group
and (2) follows.

{\bf Case 2.}   {\em $n\geqslant 1$.}  In this case,  $|G|=2^mp^n$ where $m, n\geqslant 1$,  and $C$ is a cyclic $p$-group of maximal order. As in previous case $C=\langle x\rangle$ is a normal subgroup of $G$ and 
 $G/C$ is an elementary abelian $2$-group. Note that, $G$ does not contain an element of order $2p$, and so  $C=C_G(C)$. Moreover, since 
 $$G/C=N_G(C)/C_G(C) \hookrightarrow {\rm Aut}(C),$$
and ${\rm Aut}(C)$ is a cyclic group of order $\phi(p^n)=p^{n-1}(p-1)$, we conclude that $|G/C|=2$.  Therefore, if $b$ is an involution in $G$, then $G=\langle x, b\rangle=\langle x\rangle \rtimes \langle b\rangle$, and since $b$ acts on $\langle x\rangle$ fixed-point-freely,  $G$ is  a Frobenius group of order $2p^n$ with
 cyclic kernel $C$ of order $p^n$, and (3) follows.

${\rm (b)\Rightarrow(a)}$.  Obviously.
\end{proof}

%%%%%%%%%%%%%%%%%%%%%%%%%%%%%%%%%%%%%%%%%%%%%%%%%%%%

\subsection{Commuting Graphs}
In this section, we consider the problem of finding the tree-number of the commuting graphs associated with
a family of finite simple groups. The Suzuki groups ${\rm Sz}(q)$, an infinite series of simple groups of Lie type, were defined in \cite{Suzuki2, Suzuki} as subgroups of the groups  ${\rm L}_4(q)$, with $q = 2^{2n+1}\geqslant 8$. 
In what follows, we shall give an explicit formula for $\kappa_{\cal C}({\rm Sz}(q))$.
Let $G={\rm Sz}(q)$, where $q=2^{2n+1}$.  We begin with some well-known facts about the simple group $G$.
These results have been obtained by Suzuki \cite{Suzuki2, Suzuki}:
\begin{itemize}
\item[{\rm (1)}]  Let  $r= 2^{n+1}$. Then $|G|= q^2(q-1)(q^2+1)= q^2(q-1)(q-r+1)(q+r+1)$, and  $\mu(G)=\{4, q-1, q-r+1, q+r+1\}$. 
For convenience, we write  $\alpha_q=q-r+1$ and  $\beta_q=q+r+1$.

\item[{\rm (2)}] Let $P$ be a Sylow $2$-subgroup of $G$. 
Then  $P$ is a $2$-group of order
$q^2$ with ${\rm exp}(P)=4$, which is a TI-subgroup, and  $|N_G(P)|=q^2(q-1)$.

\item[{\rm (3)}] Let $A\subset G$ be a cyclic subgroup of order $q-1$. Then $A$ is a TI-subgroup and
the normalizer $N_G(A)$ is a dihedral group of order $2(q-1)$.

\item[{\rm (4)}] Let $B\subset G$ be a cyclic subgroup of order $\alpha_q$. Then $B$ is a TI-subgroup and
the normalizer $N_G(B)$  has order $4\alpha_q$.

\item[{\rm (5)}] Let $C\subset G$ be a cyclic subgroup of order $\beta_q$. Then $C$ is a TI-subgroup and
the normalizer $N_G(C)$ has order $4\beta_q$.
\end{itemize}

We recall that, in general,  a subgroup  $H\leqslant G$  is a {\em TI-subgroup} (trivial intersection subgroup) if for every $g\in G$, either $H^g=H$ or $H\cap H^g=\{1\}$.

\begin{lm} \label{lem4.1}
 $\kappa_{\cal C}(P)=2^{(q-1)^2}q^{(q^2+q-3)}$.
\end{lm}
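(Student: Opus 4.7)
The plan is to exploit the fact that a Sylow $2$-subgroup $P$ of $\mathrm{Sz}(q)$ is a very rigid Suzuki $2$-group, reduce $\mathcal{C}(P)$ to a concrete join of complete graphs, and then apply Lemma \ref{elementary0} together with the spanning-tree formula \eqref{eq1}. Concretely, I will first record the structural facts about $P$ that I need: $P$ has order $q^2$ and exponent $4$; the center $Z(P)$ coincides with the identity together with the set of involutions of $P$ and has order $q$ (this matches the count of involutions $(q^2+1)(q-1)$ of $\mathrm{Sz}(q)$ distributed over the $q^2+1$ Sylow $2$-subgroups, which are $TI$ by item (2)); and every non-central element $x$ has order $4$ with $x^2\in Z(P)$ and centralizer $C_P(x)=\langle x\rangle Z(P)$ of order $2q$. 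The last point is the heart of the computation and the only place where one really uses that $P$ is a Suzuki $2$-group rather than an arbitrary $2$-group of exponent $4$.

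Next I will translate this into the adjacency pattern of the commuting graph. The $q$ central elements are universal vertices, so they form a clique $K_q$ joined to every other vertex. For two non-central elements $x,y\in P\setminus Z(P)$, the equivalence $xy=yx \iff y\in C_P(x)=\langle x\rangle Z(P)=Z(P)\cup xZ(P)$ holds, and since $y$ is non-central this reduces to $y\in xZ(P)$, i.e.\ $xZ(P)=yZ(P)$. Thus the non-central elements split into the $q-1$ cosets of $Z(P)$ different from $Z(P)$ itself, each of size $q$, and two non-central vertices are adjacent exactly when they lie in the same coset. It follows that
\[
\mathcal{C}(P)\;=\;K_q\;\vee\;\Bigl(\bigoplus_{i=1}^{q-1}K_q\Bigr).
\]

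With the graph in this form, Lemma \ref{elementary0} gives its Laplacian spectrum directly. Part~(1) tells me that the $q-1$ disjoint copies of $K_q$ contribute the Laplacian eigenvalue $q$ with multiplicity $(q-1)^2$ and the eigenvalue $0$ with multiplicity $q-1$; the central clique $K_q$ contributes $q$ with multiplicity $q-1$ and a single $0$. Applying part~(2) with $m=q$ and $n=q^2-q$, the spectrum of $\mathcal{C}(P)$ becomes $q^2$ (multiplicity $q$), $2q$ (multiplicity $(q-1)^2$), $q$ (multiplicity $q-2$), and $0$. Plugging into \eqref{eq1} gives
\[
\kappa_{\cal C}(P)\;=\;\frac{(q^2)^q\,(2q)^{(q-1)^2}\,q^{q-2}}{q^2}\;=\;2^{(q-1)^2}\,q^{q^2+q-3},
\]
which is the desired identity.

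The only real obstacle is the structural input in step one, specifically the centralizer claim $|C_P(x)|=2q$ for $x\in P\setminus Z(P)$. The multiplicity counts of the eigenvalues, and hence the final exponents of $2$ and $q$, are extremely sensitive to this number: any error here would propagate through the spectrum computation. Everything after that is a mechanical application of Lemma \ref{elementary0} and the Kirchhoff-type formula \eqref{eq1}, so I would take care to justify the centralizer size carefully (either by quoting the Suzuki $2$-group structure from \cite{Suzuki2,Suzuki}, or by using that $C_G(x)\subseteq C_G(x^2)=P$ for $x$ of order $4$ and that the order-$4$ classes of $\mathrm{Sz}(q)$ have centralizer order $2q$).
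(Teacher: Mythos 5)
Your proposal is correct and follows essentially the same route as the paper: both identify $\mathcal{C}(P)=K_q\vee\bigl(\bigoplus_{i=1}^{q-1}K_q\bigr)$ from the facts that $Z(P)$ is elementary abelian of order $q$ and that each non-central $x$ has centralizer $\langle x\rangle Z(P)$ of order $2q$, and then read off the Laplacian spectrum via Lemma \ref{elementary0} and apply Eq.~(\ref{eq1}). The only difference is bookkeeping: the paper justifies the centralizer claim by citing Huppert--Blackburn (that the order-$4$ elements of ${\rm Sz}(q)$ form two conjugacy classes, forcing $|C_G(x)|=2|Z(P)|$), which is exactly the second of the two justifications you sketch at the end.
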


\begin{proof}
By Theorem VIII.7.9 of \cite{hupII} and Lemma XI.11.2 of \cite{hupIII}, $Z(P)$ is an elementary abelian $2$-group of order $q$ and every element outside $Z(P)$ has order $4$.  Observe that $P$ is the centralizer in $G$ of all of the nontrivial elements of $Z(P)$.  If $x \in P \setminus Z(P)$, then $\langle Z(P), x \rangle \leqslant C_G (x)$.  In the proof of Lemma XI.11.7 of \cite{hupIII}, we see that the elements of order $4$ in $G$ lie in two conjugacy classes. This implies that $|C_G (x)| = 2 |Z(P)|$, from which we deduce that $C_G (x) = \langle Z(P), x \rangle$. 
Then
for all $x, y\in P\setminus Z(P)$ either $C_G(x) = C_G(y)$ or $C_G(x)\cap C_G(y) = Z(P)$.
Hence, $\{C_G(x) | x\in P\setminus Z(P)\}$ forms a partition of $P$ for which
 the intersection of pairwise centralizers is $Z(P)$. This shows that 
$${\cal C}(P)=K_q\vee (\underbrace{K_{q}\oplus K_{q}\oplus \cdots \oplus K_{q}}_{q-1}). $$
Moreover, by Lemma \ref{elementary0}, the eigenvalues of Laplacian matrix  $L_{{\cal C}(P)}$ are:
$$q^2, \underbrace{q^2, \ q^2, \ \ldots, \ q^2}_{q-1}, \  \underbrace{2q, \ 2q,  \ \ldots, \ 2q}_{(q-1)^2}, \  \underbrace{q, \ q, \ \ldots, \ q}_{q-2}, \ 0. $$
It follows immediately using Eq. (\ref{eq1}) that
$$\kappa_{\cal C}(P)=2^{(q-1)^2}q^{(q^2+q-3)},$$
as required.  \end{proof}

\begin{theorem} \label{th4.2} Let  $q=2^{2n+1}$, where $n\geqslant 1$ is an integer. Then, we have  
$$\kappa_{\cal C}({\rm Sz}(q))=\left(2^{(q-1)^2}q^{(q^2+q-3)}\right)^{q^2+1}(q-1)^{(q-3)a}
({\alpha_q})^{(\alpha_q-2)b} (\beta_q)^{(\beta_q-2)c},$$
where $a=q^2(q^2+1)/2$, $b=q^2(q-1) \beta_q/4$ and $c=q^2(q-1) \alpha_q/4$.

\end{theorem}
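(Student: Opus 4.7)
The plan is to exploit the TI-subgroup structure listed in facts (2)--(5) so as to decompose $\mathcal{C}(G)$ as a join of $K_1$ with a disjoint union indexed by the conjugates of $P$, $A$, $B$, and $C$. The first step is to verify that $G\setminus\{1\}$ partitions into the nonidentity parts of these conjugates, and that for every nonidentity $x\in G$ the full centralizer $C_G(x)$ lies inside the unique \emph{host} subgroup containing $x$. For $x\in P^g\setminus\{1\}$ this was already established inside the proof of Lemma \ref{lem4.1} (the centralizer is either $P^g$ itself or $\langle Z(P^g),x\rangle\subseteq P^g$); for $x$ in a conjugate of the cyclic torus $A$, $B$, or $C$ it follows from the fact that these tori are self-centralizing in $G$, which in turn reduces to the structure of their normalizers as given in (3)--(5) combined with the TI property. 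Consequently two elements from different hosts cannot commute, so
$$\mathcal{C}(G) \;=\; K_1 \,\vee\, \bigoplus_{H} \mathcal{C}\bigl(H,\,H\setminus\{1\}\bigr),$$
where $H$ ranges over all conjugates of $P$, $A$, $B$, $C$.

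The next ingredient is the spectral identity
$$\kappa\!\left(K_1 \vee \bigoplus_{i=1}^k \Gamma_i\right) \;=\; \prod_{i=1}^k \kappa\!\left(K_1 \vee \Gamma_i\right),$$
valid for connected graphs $\Gamma_i$. This follows by a short Laplacian calculation using Lemma \ref{elementary0}: the spectrum of $\bigoplus\Gamma_i$ is the union of the individual spectra (hence contains $k$ zeros), joining with $K_1$ adds $1$ to every eigenvalue and inserts a new top eigenvalue $n+1$ while restoring a single zero, and substituting into Eq.~(\ref{eq1}) gives the product after cancelling $n+1$. Because $1$ is a universal vertex in $\mathcal{C}(H)$ for each host $H$, we have $\mathcal{C}(H)=K_1\vee\mathcal{C}(H,H\setminus\{1\})$, so each factor in the product is simply $\kappa_{\cal C}(H)$.

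It remains to count the hosts and evaluate. Using $|G|=q^2(q-1)(q-r+1)(q+r+1)$ together with the normalizer orders from (2)--(5), the index formula $|G|/|N_G(\cdot)|$ yields exactly $q^2+1$ conjugates of $P$, $a=q^2(q^2+1)/2$ conjugates of $A$, $b=q^2(q-1)\beta_q/4$ of $B$, and $c=q^2(q-1)\alpha_q/4$ of $C$, matching the exponents in the theorem. For the cyclic tori Cayley's formula gives $\kappa_{\cal C}(A)=(q-1)^{q-3}$, $\kappa_{\cal C}(B)=\alpha_q^{\alpha_q-2}$, and $\kappa_{\cal C}(C)=\beta_q^{\beta_q-2}$, while Lemma \ref{lem4.1} supplies $\kappa_{\cal C}(P)=2^{(q-1)^2}q^{q^2+q-3}$; multiplying the four contributions raised to the appropriate powers produces the stated formula.

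I expect the main obstacle to be the centralizer analysis in the first step -- specifically, proving that the cyclic tori $A$, $B$, $C$ are self-centralizing in $G$ (not merely abelian and TI), so that elements from two different conjugates never commute. Once this subgroup-theoretic fact is firmly in hand, the remainder of the argument is an essentially mechanical combination of Lemma \ref{elementary0}, Lemma \ref{lem4.1}, and Cayley's formula.
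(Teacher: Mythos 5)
Your proposal is correct and follows essentially the same route as the paper: partition $G^\#$ into the nonidentity parts of the conjugates of $P$, $A$, $B$, $C$ via the TI property and self-centralizing structure, write $\mathcal{C}(G)$ as $K_1$ joined with the disjoint union of the induced subgraphs, count conjugates by normalizer indices, and finish with Lemma \ref{lem4.1} and Cayley's formula. The only difference is cosmetic: you spell out the spectral identity $\kappa(K_1\vee\bigoplus_i\Gamma_i)=\prod_i\kappa(K_1\vee\Gamma_i)$, which the paper uses implicitly when passing from the graph decomposition to the product of tree-numbers.
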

\begin{proof} Let $G={\rm Sz}(q)$, where $q=2^{2n+1}\geqslant 8$. As already mentioned,  $G$ contains a Sylow 2-subgroup $P$ of order $q^2$ and cyclic subgroups $A$, $B$, and $C$, of orders
$q-1$, $\alpha_q$ and  $\beta_q$, respectively.
Moreover, every two  distinct conjugates of them intersect trivially and every element
of $G$ is a conjugate of an element in $P\cup A\cup B\cup C$. 
Looking at the proof of Lemma 11.6, we see that the cyclic subgroups $A$, $B$, and $C$,  are the centralizers of their nonidentity elements, while $P$ is the centralizer in $G$ of all of the nontrivial elements of $Z(P)$. 
Let 
$$\begin{array}{lllll} G & = & N_Px_1\cup \cdots \cup N_Px_p
& = &  N_{A}y_1\cup \cdots \cup N_{A}y_a\\[0.3cm] 
& = & N_{B}z_1\cup \cdots \cup N_{B}z_b
& = & N_{C}t_1\cup \cdots \cup N_{C}t_c,\\[0.3cm] 
\end{array}$$
be coset decompositions of $G$ by  $N_P=N_G(P)$, $N_A=N_G(A)$, $N_B=N_G(B)$ and $N_C=N_G(C)$, where
$p=[G:N_P]=q^2+1$, $a=[G:N_A]=q^2(q^2+1)/2$, $b=[G:N_B]=q^2(q-1) \beta_q/4$ and 
$c=[G:N_C]=q^2(q-1) \alpha_q/4$.
Then, we have
$$G=P^{x_1}  \cup \cdots \cup P^{x_p}\cup A^{y_1}\cup \cdots \cup A^{y_a}\cup  B^{z_1} \cup \cdots \cup B^{z_b}\cup C^{t_1} \cup \cdots \cup C^{t_c}.$$
 This shows that 
$$\begin{array}{lll}
{\cal C}(G) & = & K_1 \vee \left(p \ {\cal C}(P^\#)\oplus a \ {\cal C}(A^\#)\oplus b  \ {\cal C}(B^\#)\oplus c  \ {\cal C}(C^\#) \right)\\[0.3cm] 
& = & K_1 \vee \left(p \  {\cal C}(P^\#)\oplus a K_{(q-1)-1}\oplus b K_{\alpha_q-1}\oplus c K_{\beta_q-1}\right), \\[0.3cm] 
\end{array}$$
and so 
$$\kappa_{\cal C}(G)=\kappa_{\cal C}(P)^p\cdot  \kappa_{\cal C}(K_{q-1})^a\cdot \kappa_{\cal C}(K_{\alpha_q})^b\cdot \kappa_{\cal C}(K_{\beta_q})^c.$$
Now, Lemma \ref{lem4.1} and  Cayley's formula  yield the result. \end{proof}

%%%%%%%%%%%%%%%%%%%%%%%%%%%%%%%%%%%%%%%%%%%%%%

\begin{center}
 {\bf Acknowledgments }
\end{center}
This work was done during the first and second authors had a visiting position at the
Department of Mathematical Sciences, Kent State
University, USA. They would like to thank the hospitality of the Department of Mathematical Sciences of KSU. 
The first author thanks the funds (2014JCYJ14, 17A110004, 20150249, 20140970, 11571129).

\end{document}